\makeatletter \@namedef{subjclassname@2010}{
  \textup{2010} Mathematics Subject Classification}
\newtheorem{thm}{Theorem}[section]
\newtheorem{cor}[thm]{Corollary}
\newtheorem{pro}[thm]{Proposition}
\newtheorem{conj}[thm]{Conjecture}
\theoremstyle{remark}
\newtheorem*{rema}{Remark}
\theoremstyle{definition}
\newtheorem*{defn}{Definition}
\newtheorem{exa}[thm]{\textbf{Example}}
\newcommand{\R}{\mathbb{R}}
\newcommand{\N}{\mathbb{N}}
\newcommand{\C}{\mathbb{C}}
\begin{document}

\title[The Fuglede Theorem and Some Intertwining Relations]{The Fuglede Theorem and Some Intertwining Relations}
\author[I. F. Z. Bensaid, S. Dehimi, B. Fuglede and M. H. MORTAD]{Ikram Fatima Zohra Bensaid, Souheyb Dehimi, Bent Fuglede and Mohammed Hichem Mortad$^*$}

\dedicatory{}
\thanks{* Corresponding author.}
\date{}
\keywords{The Fuglede theorem. Intertwining relations. Closed and
self-adjoint operators}

\subjclass[2010]{Primary 47A05, Secondary 47B25, 47A62.}

\address{(The first author): Department of Mathematics, University of Cadiz. Avenida de
la Universidad s/n E-11405. Jerez de la Frontera. Spain.}

\email{ikram.bensaid@alum.uca.es}

\address{(The second author): University of Mohamed El Bachir El Ibrahimi, Bordj Bou Arreridj.
Algeria.}

\email{sohayb20091@gmail.com}

\address{(The third author): Department of Mathematical Sciences, Universitetsparken 5, 2100 Copenhagen, Danmark.}

\email{fuglede@math.ku.dk}

\address{(The corresponding author) Department of
Mathematics, University of Oran 1, Ahmed Ben Bella, B.P. 1524, El
Menouar, Oran 31000, Algeria.}

\email{mhmortad@gmail.com, mortad.hichem@univ-oran1.dz.}

\begin{abstract}In this paper, we show a new and classic version of
the celebrated Fuglede Theorem in an unbounded setting. A related
counterexample is equally presented. In the second strand of the
paper, we give a pair of a closed and self-adjoint (unbounded)
operators which is not intertwined by any (bounded or closed)
operator except the zero operator.
\end{abstract}

\maketitle

\section{Introduction}

Undoubtedly, the Fuglede Theorem is the second salient result in
Operator Theory, at least, as far as normal operators are concerned.
It has many applications. The most tremendous one is the fact that
it improves the statement of the Spectral Theorem of normal
operators. To cite only a little amount of applications of this
powerful tool, we refer readers to \cite{Alb-Spain},
\cite{Chellali-Mortad}, \cite{Gustafson-Mortad-2014-Bull-SM},
\cite{Gustafson-Mortad-2016}, \cite{halmos-book-1982},
\cite{Jung-Mortad-Stochel}, \cite{Mortad-IEOT-2009},
\cite{Mortad-ZAA-2010}, \cite{mortad-CAOT-sum-normal},
\cite{OtaSchm}, \cite{PUT},
\cite{Radjavi-Rosenthal-Roots-Normal-JMAA}, \cite{Reh} and
\cite{YANGDU}.

Recall that this theorem states that if $T\in B(H)$ and $A$ is
normal (not necessarily bounded), then
\[TA\subseteq AT\Longleftrightarrow TA^*\subseteq A^*T.\]

The problem leading to this theorem was first raised by von Neumann
in \cite{von-Neumman-historic-Fuglede} who had already established
it in a finite dimensional setting (since this is seemingly not well
documented, readers may find it in e.g. Exercise 11.3.29 in
\cite{Mortad-Oper-TH-BOOK-WSPC}). Fuglede was the first one to
answer this problem affirmatively in \cite{FUG} (a quite different
proof popped up shortly afterwards and it is due to Halmos
\cite{Halmos-Fuglede-Putnam}). Then Putnam \cite{PUT} generalized
the result to:
\[TA\subseteq BT\Longleftrightarrow TA^*\subseteq B^*T\]
where $A$ and $B$ are normal (not necessarily bounded) and $T\in
B(H)$.

There are different proofs of the Fuglede-Putnam Theorem besides the
first two due to Fuglede and Putnam (e.g. the one in
\cite{Halmos-Fuglede-Putnam}). Perhaps the most elegant proof
 is due to Rosenblum (see \cite{Ros-Fug-Put-1958}). Then came Berberian
\cite{BER}, who noted that the Fuglede's version is actually
equivalent to the Putnam's version. Other proofs which are not
well-spread may be consulted in \cite{Deeds-Fuglede-SHORT-proof} or
\cite{RehArchmath}. See also \cite{Paliogiannis Fuglede}.

There is a particular terminology to the transformation which occurs
in the Fuglede-Putnam Theorem.

\begin{defn}
We say that $T\in B(H)$ \textbf{intertwines}\index{Intertwine} two
operators $A,B$ if $TA\subseteq BT$.
\end{defn}

Accordingly, we may restate the Fuglede-Putnam theorem as follows:
\textit{If an operator intertwines two normal operators, then it
intertwines their adjoints}.

There have been many generalizations of the Fuglede (-Putnam)
Theorem since Fuglede's paper. We note a generalization to the
so-called "spectral operators" by Dunford \cite{Dunford-Spectral
operators Fuglede} (and another proof of the latter by
Radjavi-Rosenthal \cite{Radjavi-Rosenthal Fuglede THM THEIR PAPER}).
See also \cite{FUR}, \cite{Furuta-Fuglede-Subnormal},
\cite{Mecheri-Fuglede-2004}, \cite{Mecheri-et-al BKMS-FUGLEDE},
\cite{MHM.FP}, \cite{Moslehian-Nabavi-FUGLEDE}, \cite{RAJ} and
\cite{SW} (among others). See also
\cite{Mortad-2011-survey-Embry-Fuglede}. For new versions of the
Fuglede-Putnam Theorem involving unbounded operators only, readers
may wish to consult \cite{MHM1},
\cite{Mortad-Fuglede-Putnham-All-unbd},
\cite{Paliogiannis-NEW-proofs-MORTAD} and
\cite{paliogiannis-Fug-Putnam-ALL-UNBD}. An interesting and related
paper is \cite{Jablonski et al 2014}.

Most of these generalizations seem to go into one direction only,
that is, towards relaxing the normality hypothesis whilst there are
still some unexplored territories as regards the very first version.
To get to one main problem of this paper, observe that if $A$ is
self-adjoint (and unbounded), then obviously $BA\subset AB$ implies
that $B^*A\subset AB^*$  for \textit{any} $B\in B(H)$. In
\cite{Jorgensen-1980-JFA}, it was asked whether the assumption of
the self-adjointness of $A$ can be relaxed to requiring only the
closedness of $A$ and imposing the normality of $B$? The referee of
the same reference informed Jorgensen of Fuglede's example (which we
will be recalling below). In the same reference it was shown that
$B^*A\subset AB^*$ if for instance the complement of $\sigma(B)$ is
connected and the interior is empty. Readers might also be
interested in \cite{Dehimi-Mortad-BERBERIAN-BKMS}.

Closely related to what has just been alluded at, the following
conjecture was proposed in \cite{Meziane-Mortad} (it has resisted
solutions for about three years). See Theorem \ref{Fuglede NEW
Theorem 2017} and Proposition \ref{Fuglede BT sub TB* T S.A. QUES}.

\begin{conj}\label{CONJ.} Let $T$ be an operator
(densely defined and closed if necessary) and let $B\in B(H)$ be
normal. Then
\[BT\subset TB^* \Longrightarrow B^*T\subset TB.\]
\end{conj}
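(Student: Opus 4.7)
The plan is to exploit the spectral calculus of the bounded normal $B$ together with the closedness of $T$. Iterating the hypothesis $BT\subset TB^*$ yields $B^nT\subset T(B^*)^n$ for every $n\in\N$, and by linearity $p(B)\,T\subset T\,p(B^*)$ for every complex polynomial $p$ in one variable. Since $T$ is closed and $B^*$ is bounded, this inclusion survives the uniform limit: if polynomials $p_k$ converge uniformly on $\sigma(B)\cup\sigma(B^*)$ to functions $f,g$ respectively, then $f(B)\,T\subset T\,g(B^*)$ by the usual closed-graph/approximation argument.

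The desired conclusion $B^*T\subset TB$ amounts to choosing $p_k(z)\to\bar z$ uniformly on $\sigma(B)$, since then $p_k(B)\to B^*$ and $p_k(B^*)\to B$ in operator norm. By Mergelyan's theorem this uniform approximation is available precisely when $\sigma(B)$ has empty interior and connected complement, which recovers the partial result of Jorgensen quoted in the introduction. The principal difficulty is therefore the case of a normal $B$ whose spectrum has nontrivial interior, where $\bar z$ lies outside the uniform polynomial closure on $\sigma(B)$ and the functional-calculus attack is powerless.

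As a second angle, I would reformulate on $H\oplus H$: setting $\widetilde B=B^*\oplus B$, the hypothesis $BT\subset TB^*$ is equivalent to the graph $G(T)$ being $\widetilde B$-invariant, while the desired conclusion $B^*T\subset TB$ is equivalent to $G(T)$ being $\widetilde B^*$-invariant. Hence the conjecture asks that $G(T)$ be \emph{reducing} for the bounded normal operator $\widetilde B$. A direct computation of $G(T)^\perp$, which up to a unitary flip is the graph of $-T^*$, shows that its $\widetilde B$-invariance is in turn equivalent to the analogous problem for $T^*$ with the roles of $B$ and $B^*$ interchanged, so this reformulation alone is circular.

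The core obstacle is twofold: invariance under a bounded normal operator need not force invariance under the adjoint, and approximating $\bar z$ by polynomials is topologically obstructed on general spectra. I expect a successful proof to combine the two viewpoints by exploiting the graph structure of $G(T)$ to force $\widetilde B$-reducibility beyond what either strategy achieves alone. A reasonable first milestone is the self-adjoint $T$ case, where one can work with the joint spectral measures of $T$ and $B$ directly; from there one may attempt to bootstrap to arbitrary closed $T$ via the bounded transform $T(I+T^*T)^{-1/2}$ or a Cayley-type transform. This bootstrapping step is where I expect the hardest analytic work to lie.
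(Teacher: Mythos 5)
Your proposal never closes, and it cannot be closed: the statement of Conjecture \ref{CONJ.} is false as it stands, and the paper's own treatment of it consists of a refutation (Proposition \ref{Fuglede BT sub TB* T S.A. QUES}) together with a proof of one special case (Theorem \ref{Fuglede NEW Theorem 2017} and Corollary \ref{Fugelde partial coro}, where $B$ is assumed to have a finite pure point spectrum). The counterexample starts from Fuglede's 1954 example of a unitary $U$ and a closed operator $A$ with $UA\subset AU$ but $U^*A\not\subset AU^*$, and sets $B=U\oplus U^*$ and $T=\left(\begin{smallmatrix}0 & A\\ A^* & 0\end{smallmatrix}\right)$ on $D(A^*)\oplus D(A)$; then $B$ is unitary, $T$ is self-adjoint, $BT\subset TB^*$, and yet $B^*T\not\subset TB$. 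This lands squarely on the case you single out as your ``reasonable first milestone'': for self-adjoint $T$ and unitary $B$ the implication already fails, so the plan of settling that case via joint spectral measures and then bootstrapping through the bounded transform is moot.

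That said, your diagnosis of the two strands is largely accurate and matches what the paper actually does. Iterating the intertwining relation to $p(B)A\subset A(p\circ f)(B)$ is exactly the engine of Theorem \ref{Fuglede NEW Theorem 2017}; the paper sidesteps your Mergelyan/limit step entirely by assuming $\sigma(B)$ consists of finitely many eigenvalues, so that $g\circ f$ need only agree with a polynomial at finitely many points (Lagrange-type interpolation) and no passage to a uniform limit is needed. Your Lavrentiev criterion (that $\bar z$ lies in the uniform polynomial closure on $\sigma(B)$ iff $\sigma(B)$ has empty interior and connected complement) correctly recovers J{\o}rgensen's partial result, and it is consistent with the counterexample, since by that very criterion any unitary $B$ for which the implication fails must have spectrum with disconnected complement. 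Where you go wrong is in the conclusion you draw from your own second strand: the reformulation shows the conjecture asks that an invariant graph subspace of a bounded normal operator be reducing, and invariant subspaces of normal operators are in general \emph{not} reducing (the bilateral shift already shows this). That obstruction is not a difficulty to be overcome; it is the mechanism of the counterexample.
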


What is interesting about this conjecture is the fact that it holds
when $T\in B(H)$ (as we recover the bounded version of the
Fuglede-Putnam Theorem), and as it stands, it is covered by none of
the known (unbounded) generalizations of Fuglede-Putnam Theorem (see
e.g. \cite{Mortad-Fuglede-Putnham-All-unbd},
\cite{paliogiannis-Fug-Putnam-ALL-UNBD} and \cite{STO}).

In this paper, we show that this conjecture is true in case $B$ has
a finite pure point spectrum (Theorem \ref{Fuglede NEW Theorem
2017}). It is, however, not true even if we assume that $A$ is
self-adjoint and $B$ is unitary. In the second part of this paper,
we provide a pair of a closed and self-adjoint (unbounded) operators
which is not intertwined by any (bounded or closed) operator except
the zero operator.

Finally, we refer readers to \cite{tretetr-book-BLOCK} for
properties and results about matrices of unbounded operators which
will be helpful in the sequel. For the general theory of unbounded
operators, readers may consult \cite{Birman-Solomjak-sepctral
theorem of s.a. operators} or \cite{SCHMUDG-book-2012} or
\cite{Weidmann}.

\section{Main Results}

\begin{thm}\label{Fuglede NEW Theorem 2017}
Let $B$ be a bounded normal operator with a finite pure point
spectrum and let $A$ be a closed (possibly unbounded) operator on a
separable complex Hilbert space $H$. Let $f,g:\C\to \C$ be two
continuous functions. Then
\[BA\subset Af(B)\Longrightarrow g(B)A\subset A(g\circ f)(B).\]
\end{thm}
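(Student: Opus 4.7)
The plan is to reduce the statement for an arbitrary continuous $g$ to the case where $g$ is a polynomial, which in turn follows by iterating the intertwining hypothesis. The finiteness of $\sigma(B)$ together with the normality of $B$ is exactly what makes the reduction from continuous to polynomial work exactly (not merely approximately), via Lagrange interpolation.

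First I would show by induction on $k$ that for every $k\geq 0$ and every $x\in D(A)$, one has $f(B)^k x\in D(A)$ and $B^k Ax=Af(B)^k x$. The base case $k=0$ is trivial and $k=1$ is the hypothesis $BA\subset Af(B)$; the induction step uses that $B$ and $f(B)$ are bounded, so $B^{k+1}Ax=B(B^kAx)=B(Af(B)^k x)$, and then applying the hypothesis to the vector $f(B)^k x\in D(A)$ gives $B A f(B)^k x = A f(B)^{k+1}x$ together with $f(B)^{k+1}x\in D(A)$. Taking finite linear combinations then yields, for every polynomial $p\in\C[z]$,
\[
p(B)A\subset A\,(p\circ f)(B),
\]
where the only use of the closedness of $A$ is the trivial additivity of $A$ on a finite sum of vectors from $D(A)$.

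Next I would use the structure of $B$. Since $B$ is normal and $\sigma(B)=\{\lambda_1,\dots,\lambda_n\}$ is finite, the spectral theorem gives $B=\sum_{i=1}^n \lambda_i P_i$ with mutually orthogonal projections $P_i$ summing to $I$, and for any continuous $h:\C\to\C$ the functional calculus produces $h(B)=\sum_{i=1}^n h(\lambda_i)P_i$. Hence $h(B)$ depends only on the values of $h$ on the finite set $\sigma(B)$, and analogously $h(f(B))=(h\circ f)(B)$ depends only on the values of $h$ on the finite set $f(\sigma(B))$.

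Now I would invoke Lagrange interpolation: since $\sigma(B)\cup f(\sigma(B))$ is a finite subset of $\C$ of cardinality at most $2n$, there exists a polynomial $p\in\C[z]$ with $p(z)=g(z)$ for every $z\in\sigma(B)\cup f(\sigma(B))$. By the observation of the previous paragraph, $p(B)=g(B)$ and $(p\circ f)(B)=(g\circ f)(B)$. Plugging this $p$ into the polynomial intertwining relation established in the first step yields
\[
g(B)A=p(B)A\subset A\,(p\circ f)(B)=A\,(g\circ f)(B),
\]
which is exactly the conclusion.

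The only genuinely delicate point is the inductive step producing $f(B)^k x\in D(A)$ for all $k$; after that, everything is formal. The appearance of $g\circ f$ rather than, say, $g$ itself is natural: each application of $B$ on the left transfers to one application of $f(B)$ on the right, so a polynomial in $B$ on the left transfers to the same polynomial in $f(B)$, i.e.\ $(p\circ f)(B)$, on the right.
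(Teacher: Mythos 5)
Your proof is correct and follows essentially the same route as the paper: iterate the intertwining inclusion to obtain $p(B)A\subset A(p\circ f)(B)$ for all polynomials $p$, then exploit the finiteness of the spectrum to replace the continuous $g$ by an interpolating polynomial. In fact your choice of interpolation set $\sigma(B)\cup f(\sigma(B))$ is the careful one, since it guarantees both $p(B)=g(B)$ and $(p\circ f)(B)=(g\circ f)(B)$, whereas the paper's polynomial is only required to match $g$ at the points $f(\lambda_j)$, which justifies the second identity but not, as written, the first.
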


\begin{proof}The hypothesis $BA\subset Af(B)$ clearly gives
\[D(A)=D(BA)\subset D(Af(B))\]
where $D$ stands for domains. Hence
\[D(B^2A)=D[B(BA))]\subset D[B(Af(B))]=D[(BA)f(B)]\subset D[(Af(B))f(B)]=D[A(f(B))^2],\]
and next successively, for any $x\in D(A)$,
\[B^2Ax=B(BA)x=B(Af(B))x=(BA)f(B)x=(Af(B))f(B)x=A(f(B))^2x.\]
Hence $B^2A\subset A(f(B))^2$ and by iteration
\[B^mA\subset A(f(B))^m\]
for any $m\in\N$. Therefore,
\[p(B)A\subset A(p\circ f)(B)\]
for any polynomial $p\in \C(X)$.

By assumption, $B$ has a point spectrum with finitely many distinct
eigenvalues $\lambda_j,j\in\{1,\cdots,n\}$, and corresponding
eigenprojectors $E_j$ adding up to the identity operator $I$, so
$B=\sum_{j=1}^n\lambda_jE_j$ is the spectral representation of $B$.
For the given continuous function $g:\C\to \C$, there exists a
polynomial $p$ such that $p(f(\lambda_j))=g(f(\lambda_j))$. In fact,
for any $k\in\{1,\cdots,n\}$, there is a polynomial $p_k$ with roots
$f(\lambda_j)$, $j\neq k$, and with the value
$p(f(\lambda_k))=(g\circ f)(\lambda_k)$ at $f(\lambda_k)$. Then the
polynomial $p:=\sum_{k=1}^np_k$ has the asserted property. From the
hypothesis $BA\subset Af(B)$, we obtain
\[g(B)A=p(B)A\subset A(p\circ f)(B)=A(g\circ f)(B).\]

\end{proof}

\begin{cor}\label{Fugelde partial coro}With $A$ and $B$ as in Theorem \ref{Fuglede NEW
Theorem 2017}, we have
\[BA\subset AB^*\Longrightarrow B^*A\subset AB.\]
\end{cor}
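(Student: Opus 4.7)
The plan is to deduce the corollary as a direct specialization of Theorem \ref{Fuglede NEW Theorem 2017}. I would choose the two continuous functions
\[
f(z)=\bar z,\qquad g(z)=\bar z,
\]
both of which are continuous maps $\C\to\C$ as required by the theorem. Since $B$ is bounded normal, the continuous functional calculus gives $f(B)=B^{*}$, while $(g\circ f)(z)=\overline{\bar z}=z$, so $(g\circ f)(B)=B$.

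With these choices, the hypothesis $BA\subset AB^{*}$ of the corollary is exactly $BA\subset Af(B)$, and the conclusion of the theorem
\[
g(B)A\subset A\,(g\circ f)(B)
\]
becomes $B^{*}A\subset AB$, which is what has to be proved. So the entire argument reduces to verifying that the hypothesis and conclusion of the corollary are the instances of the hypothesis and conclusion of the theorem obtained by taking $f=g=\overline{(\cdot)}$.

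There is essentially no obstacle: the only non-formal point is recording that, for a bounded normal operator $B$ with finite pure point spectrum $B=\sum_{j=1}^{n}\lambda_{j}E_{j}$, one has $f(B)=\sum_{j=1}^{n}\overline{\lambda_{j}}E_{j}=B^{*}$ and $(g\circ f)(B)=\sum_{j=1}^{n}\lambda_{j}E_{j}=B$, which is immediate from the spectral representation used in the proof of Theorem \ref{Fuglede NEW Theorem 2017}. Thus the corollary follows in one line from that theorem.
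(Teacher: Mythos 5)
Your proof is correct and coincides with the paper's own argument: both apply Theorem \ref{Fuglede NEW Theorem 2017} with $f(z)=g(z)=\overline{z}$, so that $f(B)=B^*$ and $(g\circ f)(B)=B$. The extra verification via the spectral representation is a harmless elaboration of the same one-line specialization.
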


\begin{proof}
Just apply Theorem \ref{Fuglede NEW Theorem 2017} to the functions
$f,g:z\mapsto \overline{z}$ (so that $g\circ f$ becomes the identity
map on $\C$).
\end{proof}

A similar reasoning applies to establish the following consequence:

\begin{cor}\label{Fugelde partial coro II}With $A$ and $B$ as in Theorem \ref{Fuglede NEW
Theorem 2017}, we likewise have
\[BA\subset AB\Longrightarrow B^*A\subset AB^*.\]
\end{cor}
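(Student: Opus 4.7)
The plan is to invoke Theorem \ref{Fuglede NEW Theorem 2017} directly, just as in Corollary \ref{Fugelde partial coro}, but with a different choice of the continuous functions $f$ and $g$. In the previous corollary the authors took both $f$ and $g$ to be complex conjugation, so that $g\circ f$ reduced to the identity on $\C$ and the hypothesis $BA\subset AB^{*}$ (encoded by $f(z)=\overline{z}$) gave back $B^{*}A\subset AB$. Here I want the hypothesis to read $BA\subset AB$ and the conclusion to read $B^{*}A\subset AB^{*}$, so the natural choice is $f(z)=z$ (the identity on $\C$) together with $g(z)=\overline{z}$.

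With these choices one has $f(B)=B$, $g(B)=B^{*}$, and $(g\circ f)(z)=\overline{z}$, so $(g\circ f)(B)=B^{*}$. Both $f$ and $g$ are continuous on $\C$, and the operators $B$ and $A$ still satisfy the hypotheses of Theorem \ref{Fuglede NEW Theorem 2017}. Feeding these ingredients into that theorem yields
\[BA\subset Af(B)=AB\;\Longrightarrow\; g(B)A=B^{*}A\subset A(g\circ f)(B)=AB^{*},\]
which is precisely the assertion of the corollary.

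The only point that deserves a brief check is that for these elementary choices the functional-calculus symbols $f(B)$ and $g(B)$ really coincide with $B$ and $B^{*}$; since $B$ is normal with finite pure point spectrum and spectral representation $B=\sum_{j=1}^{n}\lambda_{j}E_{j}$, this is immediate, because $f(B)=\sum_{j}\lambda_{j}E_{j}=B$ and $g(B)=\sum_{j}\overline{\lambda_{j}}E_{j}=B^{*}$. Consequently there is no genuine obstacle: the whole corollary reduces to a one-line application of Theorem \ref{Fuglede NEW Theorem 2017} with $f$ the identity and $g$ the conjugation map on $\C$.
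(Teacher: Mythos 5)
Your proof is correct and is exactly the ``similar reasoning'' the paper alludes to: apply Theorem \ref{Fuglede NEW Theorem 2017} with $f$ the identity and $g$ the conjugation map, so that $f(B)=B$, $g(B)=B^{*}$ and $(g\circ f)(B)=B^{*}$. Your added check that $f(B)$ and $g(B)$ agree with $B$ and $B^{*}$ under the finite spectral representation is a sensible (if routine) verification, and nothing further is needed.
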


Using an idea by Berberian, we may generalize this result to the
case of two normal operators whereby we obtain a Fuglede-Putnam
style theorem.

\begin{pro}Let $B$ and $C$ be bounded normal operators with a finite pure point spectrum and
let $A$ be a closed (possibly unbounded) operator on a separable
complex Hilbert space $H$. Then
\[BA\subset AC\Longrightarrow B^*A\subset AC^*.\]
\end{pro}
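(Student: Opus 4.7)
The plan is to apply Berberian's trick to reduce the two-normal-operator statement to the single-normal-operator statement of Corollary \ref{Fugelde partial coro}. On the Hilbert space $H\oplus H$, I will form the block operator
\[
N=\begin{pmatrix} B & 0 \\ 0 & C^* \end{pmatrix},
\]
which is bounded and normal. Its pure point spectrum is $\sigma_p(B)\cup\overline{\sigma_p(C)}$, a finite set, so $N$ meets the hypotheses of Corollary \ref{Fugelde partial coro}.

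Next I will introduce
\[
\tilde{A}=\begin{pmatrix} 0 & A \\ 0 & 0 \end{pmatrix},\qquad D(\tilde{A})=H\oplus D(A),
\]
and observe that $\tilde{A}$ inherits closedness from $A$ by the usual coordinatewise argument (if $(x_n,y_n)\to (x,y)$ and $\tilde{A}(x_n,y_n)=(Ay_n,0)\to(z,0)$, closedness of $A$ forces $y\in D(A)$ and $Ay=z$).

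The bookkeeping step is to translate the hypothesis and conclusion through this construction. Computing the products,
\[
N\tilde{A}=\begin{pmatrix} 0 & BA \\ 0 & 0 \end{pmatrix},\qquad \tilde{A}N^*=\begin{pmatrix} 0 & AC \\ 0 & 0 \end{pmatrix},
\]
so $N\tilde{A}\subset\tilde{A}N^*$ is exactly the assumption $BA\subset AC$ (together with the implicit domain condition $D(A)\ni y\Rightarrow Cy\in D(A)$, which is forced by $BA\subset AC$). Symmetrically,
\[
N^*\tilde{A}=\begin{pmatrix} 0 & B^*A \\ 0 & 0 \end{pmatrix},\qquad \tilde{A}N=\begin{pmatrix} 0 & AC^* \\ 0 & 0 \end{pmatrix},
\]
so $N^*\tilde{A}\subset\tilde{A}N$ is exactly the desired conclusion $B^*A\subset AC^*$.

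With these identifications in hand, Corollary \ref{Fugelde partial coro} applied to the pair $(N,\tilde{A})$ delivers the implication in a single stroke. I expect the only real obstacle to be purely notational: making sure the domain of $\tilde{A}$ is chosen so that $\tilde{A}$ really is closed, and carefully writing out the domains of $N\tilde{A}$, $\tilde{A}N^*$, $N^*\tilde{A}$, and $\tilde{A}N$ so that the two inclusion statements translate into $BA\subset AC$ and $B^*A\subset AC^*$ without loss. Once that block-matrix calculus is laid out, the proof is essentially a one-liner invoking the previous corollary.
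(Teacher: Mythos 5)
Your proof is correct and is essentially the paper's own argument: the same Berberian block-matrix trick on $H\oplus H$ with $\tilde{A}=\left(\begin{smallmatrix} 0 & A \\ 0 & 0 \end{smallmatrix}\right)$, $D(\tilde{A})=H\oplus D(A)$, reducing the two-operator statement to the single-operator corollary. The only (immaterial) difference is that you place $C^*$ in the lower diagonal corner and invoke Corollary \ref{Fugelde partial coro}, whereas the paper uses $\mathrm{diag}(B,C)$ and invokes Corollary \ref{Fugelde partial coro II}; the two reductions are trivially equivalent.
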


\begin{proof}Define $\tilde{B}$ on $H\oplus H$ by:
\[\tilde{B}=\left(
              \begin{array}{cc}
                B & 0 \\
                0 & C \\
              \end{array}
            \right)
\]
and let $\tilde{A}=\left(
             \begin{array}{cc}
               0 & A \\
               0 & 0 \\
             \end{array}
           \right)$ with $D(\tilde{A})=H\oplus D(A)$. Since $BA\subset
           AC$, it follows that
           \[\tilde{B}\tilde{A}=\left(
             \begin{array}{cc}
               0 & BA \\
               0 & 0 \\
             \end{array}
           \right)\subset \left(
             \begin{array}{cc}
               0 & AC \\
               0 & 0 \\
             \end{array}
           \right)=
           \tilde{A}\tilde{B}\] for $D(\tilde{B}\tilde{A})=H\oplus
           D(A)\subset H\oplus D(AC)=\tilde{A}\tilde{B}$.

           Now, since $B$ and $C$ are normal, so is
           $\tilde{B}$. Finally, apply Corollary \ref{Fugelde partial coro II} to the
           pair $(\tilde{B},\tilde{A})$ to get
           \[\tilde{B}^*\tilde{A}\subset
           \tilde{A}\tilde{B}^*\] which, upon examining their
           entries, yields the required result.
\end{proof}

\begin{cor}Let $B$ and $C$ be bounded normal operators with a finite pure point spectrum and
let $A$ be a densely defined operator on a separable complex Hilbert
space $H$. Then
\[BA\subset AC\Longrightarrow CA^*\subset A^*B.\]
\end{cor}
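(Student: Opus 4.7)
The plan is to reduce this corollary to the preceding Proposition by passing to adjoints. The key observation is that although $A$ is assumed only densely defined, its adjoint $A^*$ is automatically closed, so the Proposition will be applicable to $A^*$ once we have produced an appropriate intertwining relation for it.

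First I would take adjoints in $BA\subset AC$ to get $(AC)^*\subset (BA)^*$. Since $B$ is bounded and everywhere defined, $(BA)^*=A^*B^*$. On the other side, from $D(A)=D(BA)\subset D(AC)$ it follows that $AC$ is densely defined, so its adjoint makes sense; moreover a short computation shows $C^*A^*\subset (AC)^*$ (for $y\in D(A^*)$ and $x\in D(AC)$, one has $\langle ACx,y\rangle=\langle Cx,A^*y\rangle=\langle x,C^*A^*y\rangle$). Chaining the two inclusions yields
\[C^*A^*\subset (AC)^*\subset (BA)^*=A^*B^*.\]

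Next I would apply the preceding Proposition with $(C^*,A^*,B^*)$ playing the roles of $(B,A,C)$. Since $B$ and $C$ are bounded normal with finite pure point spectrum, so are $C^*$ and $B^*$ (their eigenvalues are simply conjugated, while the spectral projections are unchanged), and $A^*$ is closed. The Proposition therefore converts $C^*A^*\subset A^*B^*$ into $CA^*\subset A^*B$, which is the desired conclusion.

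The only step that is not pure bookkeeping with adjoints is the inclusion $C^*A^*\subset(AC)^*$; equality would not in general be available without further hypotheses on $A$, but the one-sided inclusion is exactly what is needed for the argument above to close.
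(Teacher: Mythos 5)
Your proof is correct and takes essentially the same route as the paper, whose entire argument is the one-liner ``Merely use the foregoing result, then take adjoints.'' Your write-up is in fact the careful reading of that sentence: by passing to adjoints first (using $C^*A^*\subset (AC)^*\subset (BA)^*=A^*B^*$) and then applying the Proposition to the automatically closed operator $A^*$, you correctly sidestep the fact that $A$ itself is only assumed densely defined, so the Proposition could not have been applied to $A$ directly.
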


\begin{proof}
Merely use the foregoing result, then take adjoints.
\end{proof}

One may wonder whether $BT\subset TB^*$ implies $B^*T\subset TB$ in
the events of the self-adjointness of $T$ and the normality of $B\in
B(H)$? The next example says that this is untrue, thus providing a
counterexample to Conjecture \ref{CONJ.}.

\begin{pro}\label{Fuglede BT sub TB* T S.A. QUES}
There is a unitary $B\in B(H)$ and a self-adjoint $T$ with domain
$D(T)\subset H$ such that $BT\subset TB^*$ but $B^*T\not\subset TB$.
\end{pro}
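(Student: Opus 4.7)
My plan is to produce the counterexample through a $2\times 2$ block-matrix construction that reduces the problem to an asymmetric-domain phenomenon for a closed densely defined operator intertwined with a unitary on a single auxiliary space. On $H=H_0\oplus H_0$, I would put
\[
T=\begin{pmatrix} 0 & A^* \\ A & 0 \end{pmatrix},\qquad
B=\begin{pmatrix} U & 0 \\ 0 & U^* \end{pmatrix},
\]
where $A$ is a closed, densely defined (unbounded) operator on $H_0$ and $U$ is a unitary on $H_0$. Then $T$ is self-adjoint by the standard criterion for such off-diagonal blocks with a closed densely defined corner, and $B$ is plainly unitary.

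A direct computation on the blocks shows that $BT\subset TB^*$ is equivalent to $U^*A\subset AU^*$, i.e.\ $U^*(D(A))\subset D(A)$ together with $U^*Af=AU^*f$ on $D(A)$. In the same way, the target conclusion $B^*T\subset TB$ is equivalent to $UA\subset AU$, i.e.\ $U(D(A))\subset D(A)$ together with $UAf=AUf$ on $D(A)$. The crucial observation is that, for an unbounded $A$, the two set-inclusions $U^*(D(A))\subset D(A)$ and $U(D(A))\subset D(A)$ are not equivalent, even though $U$ is a bijection of $H_0$: a strict containment $U^*(D(A))\subsetneq D(A)$ is exactly the assertion that $D(A)\not\subset U^*(D(A))$, i.e.\ $U(D(A))\not\subset D(A)$. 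So it suffices to produce a pair $(A,U)$ with $U^*A\subset AU^*$ and $U^*(D(A))\subsetneq D(A)$ strict.

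For the concrete construction I would work on $H_0=\ell^2(\mathbb Z)$ with $U$ the bilateral shift $Ue_n=e_{n+1}$, and take $D(A)$ cut out by an asymmetric weight of the form $D(A)=\{x\in\ell^2(\mathbb Z):\sum_n w_n|x_n|^2<\infty\}$ where $w_n$ is constant for $n\le 0$ and grows so fast as $n\to+\infty$ that $w_{n+1}/w_n\to\infty$. A direct verification then gives $U^*(D(A))\subsetneq D(A)$: the defining sum for $U^*x$ involves $w_{n+1}$ in place of $w_n$, which is weaker in one direction but strictly stronger because of the unbounded ratio, so an explicit $x\in D(A)$ can be exhibited with $Ux\notin D(A)$.

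The main obstacle is the tension between the commutation requirement $U^*A\subset AU^*$ and the desired asymmetry of $D(A)$: naturally shift-commuting operators (Fourier multipliers, convolutions, periodic multiplication operators) have shift-invariant domains, which destroys the asymmetry. The way out is to build $A$ not as a ``clean'' shift-commuting operator but as the closed restriction of a shift-commuting ambient operator $A_0$ to the strictly $U^*$-invariant dense subspace $D(A)\subsetneq D(A_0)$; the identity $U^*Af=AU^*f$ on $D(A)$ is then inherited from the full commutation of $A_0$ with $U^*$, and closedness plus dense definedness of $A$ are checked by a separate graph-closure argument. Substituting the resulting $(A,U)$ into the block formulas delivers the required $(B,T)$.
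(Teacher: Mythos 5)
Your block-matrix reduction is sound and is in fact the same device the paper uses: with $T=\left(\begin{smallmatrix}0 & A^*\\ A & 0\end{smallmatrix}\right)$ self-adjoint on $D(A)\oplus D(A^*)$ and $B=\left(\begin{smallmatrix}U & 0\\ 0 & U^*\end{smallmatrix}\right)$ unitary, the proposition reduces (after renaming $U\leftrightarrow U^*$) to exhibiting a unitary $V$ and a closed, densely defined $A$ on a single space with $VA\subset AV$ but $V^*A\not\subset AV^*$; your domain bookkeeping ($U^*A\subset AU^*$ forces $U^*(D(A))\subset D(A)$, and strictness of that inclusion kills $UA\subset AU$) is also correct. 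The paper does not construct such a pair from scratch: it invokes Fuglede's 1954 solution to Problem 3 in \emph{Math.\ Scand.}, where exactly this example is built. All the genuine difficulty of the proposition is concentrated in that example, and that is where your argument has a real gap.

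Your plan is to take $D(A)=\{x\in\ell^2(\Z):\sum_n w_n|x_n|^2<\infty\}$ for an asymmetric weight $w$ and to let $A$ act as the restriction of an unspecified shift-commuting ambient operator $A_0$ to this domain. Three things are left unproved, and none is routine: (i) that some shift-commuting $A_0$ exists with the weighted space contained in $D(A_0)$ --- you never name $A_0$; (ii) that the restriction of $A_0$ to the weighted space is \emph{closed}: that space is complete in the graph norm of the diagonal operator $W=\mathrm{diag}(\sqrt{w_n})$, not in the graph norm of $A_0$, and there is no reason for the two norms to be comparable there; (iii) if instead you pass to the graph closure, as your final sentence suggests, then $D(A)$ is no longer the weighted space you prescribed, and the one property that makes $B^*T\not\subset TB$ --- namely $U(D(A))\not\subset D(A)$ --- must be re-established for the domain of the closure. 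Controlling the domain of that closure is precisely the content of Fuglede's 1954 construction, so the step you defer to ``a separate graph-closure argument'' is the whole theorem. As written the proof is incomplete; it becomes complete, and essentially identical to the paper's, if you cite the 1954 example for the pair $(V,A)$ instead of attempting to rebuild it.
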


First, we recall the following example (which appeared in
\cite{FUG-1954-cexp-proper extension}):

\begin{exa}
There exists a unitary $B\in B(H)$ and a closed and symmetric $T$
with domain $D(T)\subset H$ such that $BT\subset TB$ but
$B^*T\not\subset TB^*$.
\end{exa}

Now, we prove Proposition \ref{Fuglede BT sub TB* T S.A. QUES}.

\begin{proof}Consider a unitary
$U\in B(H)$ and a closed $A$ such that $UA\subset AU$ and
$U^*A\not\subset AU^*$. Consider
\[B=\left(
      \begin{array}{cc}
        U & 0 \\
        0 & U^* \\
      \end{array}
    \right)\text{ and } T=\left(
                            \begin{array}{cc}
                              0 & A \\
                              A^* & 0 \\
                            \end{array}
                          \right).
\]
Then $B$ is unitary and $T$ is self-adjoint on $D(A^*)\oplus D(A)$
(thanks to the closedness of $A$). Besides,
\[BT=\left(
         \begin{array}{cc}
           0 & UA \\
           U^*A^* & 0 \\
         \end{array}
       \right)\text{ and }TB^*=\left(
                               \begin{array}{cc}
                                 0 & AU \\
                                 A^*U^* & 0 \\
                               \end{array}
                             \right).\]
Since $UA\subset AU$, it follows by taking adjoints that
$U^*A^*\subset A^*U^*$. Therefore, $BT\subset TB^*$. Since
$U^*A\not\subset AU^*$ is equivalent to $UA^*\not\subset A^*U$, we
may thereby get that
\[B^*T\not\subset TB\]
as $D(B^*T)\not\subset D(TB)$.
\end{proof}

Now, we pass to the second topic of the paper. Fuglede found in
\cite{FUG} a closed operator which did not commute with any bounded
operator except scalar ones (i.e. $\alpha I$ where $\alpha\in\C$).
The next two results lie within the same scope. In addition, they
allow us to establish the uniqueness of the solution of some
particular equations.

\begin{pro}\label{Fugl MINE two interwining A B QUESTION} On some Hilbert space
$H$, there is a self-adjoint operator $A$ and a densely defined
closed operator $B$ such that $TA\subset BT$ (whenever $T\in B(H)$)
implies $T=0$. Also (for the same pair $A$ and $B$), $SB\subset AS$
for any $S\in B(H)$ forces $S=0$.
\end{pro}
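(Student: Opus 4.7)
The plan is to construct $A$ and $B$ on a simple Hilbert space so that their point spectra are disjoint in a way that forces any bounded intertwiner between them, in either direction, to send basis vectors to non-existent eigenvectors. The cleanest choice is diagonal: let $H=\ell^2(\N)$ with orthonormal basis $\{e_n\}_{n\ge 1}$, set $Ae_n=ne_n$ on its natural maximal domain (so $A$ is self-adjoint with pure point spectrum $\N$), and let $B:=A+iI$, which is densely defined, closed (and even normal) with $D(B)=D(A)$ and $Be_n=(n+i)e_n$.

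First I would handle the direction $TA\subset BT$ for $T\in B(H)$. Since $T$ is bounded, $D(TA)=D(A)$, and evaluating the intertwining on $e_n\in D(A)$ gives $nTe_n=BTe_n$, which entails $Te_n\in D(B)$ and $(B-nI)Te_n=0$. But $B-nI$ is diagonal with entries $(m-n)+i$, each of imaginary part $1$ and hence nonzero, so $B-nI$ is injective. Hence $Te_n=0$ for every $n$; by density of $\mathrm{span}\{e_n\}$ and boundedness of $T$, we conclude $T=0$.

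The second assertion is essentially symmetric. If $S\in B(H)$ and $SB\subset AS$, then evaluating on $e_n\in D(B)=D(A)$ gives $(n+i)Se_n=ASe_n$, i.e. $(A-(n+i)I)Se_n=0$. The operator $A-(n+i)I$ is diagonal with entries $(m-n)-i$, again never zero, so $Se_n=0$ for all $n$ and thus $S=0$.

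There is no serious obstacle: the only points to watch are that $D(TA)=D(A)$ and $D(SB)=D(B)$ (automatic from the boundedness of $T$ and $S$), and that every $e_n$ lies in the common domain $D(A)=D(B)$. The argument genuinely uses that $A$ has a complete basis of eigenvectors, which is why one takes an operator with pure point spectrum rather than, say, $M_x$ on $L^2(\R)$; shifting by $iI$ then creates the required spectral gap in both directions at once.
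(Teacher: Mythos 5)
Your proof is correct, but it runs on an entirely different mechanism from the one in the paper. You exploit spectral disjointness: $A$ is diagonal with real eigenvalues $n$, while $B=A+iI$ has eigenvalues $n+i$; evaluating $TA\subset BT$ on the eigenbasis forces $Te_n\in\ker(B-nI)=\{0\}$ because no diagonal entry $(m-n)+i$ vanishes, and symmetrically $A-(n+i)I$ is injective (a self-adjoint operator minus a non-real scalar), which kills $S$. The domain checks you flag ($D(TA)=D(A)$, $e_n\in D(A)=D(B)$, and $Te_n\in D(B)$ being part of the inclusion) are indeed the only delicate points, and you handle them correctly. The paper instead takes $A$ to be an arbitrary unbounded self-adjoint operator and chooses $B$ to be a closed operator with the pathological property $D(B^2)=D({B^*}^2)=\{0\}$; iterating the intertwining relation gives $TA^2\subset B^2T$, whence $D(A^2)\subset D(B^2T)=\ker T$, and density of $D(A^2)$ together with closedness of $\ker T$ yields $T=0$ (the second claim follows by taking adjoints and using $D({B^*}^2)=\{0\}$). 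Your construction is more elementary and self-contained: it needs no exotic operators, and it even makes $B$ normal, showing the phenomenon is not tied to any pathology of $B$. What the paper's route buys is flexibility in the choice of $A$ (any unbounded self-adjoint operator works) and, as noted in the remark following the proposition, the option of making $B$ symmetric and semi-bounded via Chernoff's example --- features that your $B=A+iI$ cannot have.
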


\begin{proof}
Let $H=L^2(\R)\oplus L^2(\R)$ and let $A$ be any unbounded
self-adjoint operator with domain $D(A)\subset H$ and let $B$ be a
closed operator such that
\[D(B^2)=D\left({B^*}^2\right)=\{0\}\]
(as in \cite{Dehimi-Mortad-Chernoff}, cf.
\cite{Mortad-Powers-Trivial-domains-CEXAMP}). Let $T\in B(H)$. Then,
clearly
\[TA\subset BT\Longrightarrow TA^2\subset BTA\subset B^2T.\]
Hence
\[D(TA^2)=D(A^2)\subset D(B^2T)=\{x\in H: Tx\in D(B^2)=\{0\}\}=\ker T.\]
Since $A^2$ is densely defined, it follows that
\[H=\overline{D(A^2)}\subset \overline{\ker T}=\ker T\subset H.\]
Hence $\ker T=H$, that is, $T=0$, as required.

Now, we pass to the second part of the question. Plainly,
\[SB\subset AS\Longrightarrow S^*A\subset B^*S^*.\]
As before, we obtain
\[S^*A^2\subset {B^*}^2S^*.\]
Similar arguments as above then yield $S^*=0$ or simply $S=0$, as
needed.
\end{proof}

\begin{rema}
In fact, the first case of the foregoing counterexample may be
beefed up by even allowing $B$ to be also symmetric and semi-bounded
(see e.g. \cite{Weidmann} for the definition of semi-boundedness).
This is based on the famous counterexample by Chernoff in \cite{CH}.
\end{rema}

\begin{pro}
On some Hilbert space $H$, there are two densely defined closed
operators $A$ and $B$ such that $TA\subset BT$ implies $T=0$
whenever $T$ is closed.
\end{pro}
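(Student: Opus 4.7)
The plan is to reuse the pair $(A,B)$ from Proposition \ref{Fugl MINE two interwining A B QUESTION}---namely $A$ self-adjoint on $H = L^2(\R) \oplus L^2(\R)$ and $B$ closed densely defined with $D(B^2) = D((B^*)^2) = \{0\}$---and to push the iteration argument from bounded $T$ to the case of closed $T$. First I would run the same iteration as in the bounded proof: for $x \in D(A^2)$ with both $Ax \in D(T)$ and $A^2 x \in D(T)$, the chain
\[
 x \in D(TA) \subset D(BT), \quad Ax \in D(TA) \subset D(BT), \quad T(A^2 x) = B^2 Tx,
\]
together with $D(B^2) = \{0\}$ forces $Tx = 0$. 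Hence $T$ vanishes on
\[
D^{*} := \{x \in D(A^2) : Ax \in D(T) \text{ and } A^2 x \in D(T)\}.
\]

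The key new step is to show that $D^{*}$ is dense in $H$. Once this is established, closedness of $T$ yields $T = 0$: for any $y \in H = \overline{D^{*}}$, pick $x_n \in D^{*}$ with $x_n \to y$; then $(x_n,0) \to (y,0)$ lies in the closed graph $G(T)$, so $y \in D(T)$ with $Ty = 0$. A natural way to produce the density is through the bounded spectral projections $Q_m := E_A([-m,m])$, which commute with $A$, map $H$ into $\bigcap_{k \ge 0} D(A^k)$, and satisfy that $\bigcup_m Q_m H$ is dense in $H$. If one can show $Q_m H \subset D(T)$ for every $m$, then $Q_m H \subset D^{*}$ (since $A$ preserves $Q_m H$, every $A^k$-image lies in $D(T)$) and density follows. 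Equivalently, under $Q_m H \subset D(T)$ the operator $T Q_m$ is bounded by the closed graph theorem applied to $T|_{Q_m H}$, satisfies $(TQ_m) A \subset B(TQ_m)$, and Proposition \ref{Fugl MINE two interwining A B QUESTION} applied to the bounded operator $T Q_m$ gives $T Q_m = 0$.

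The hardest step is to justify the containment $Q_m H \subset D(T)$, which is not an automatic consequence of $TA \subset BT$. To establish it, one must exploit the closedness of $T$ in a more substantive way: I would attempt to show that $D(T)$ is invariant under the bounded resolvents $R(\lambda) := (A-\lambda)^{-1}$ (for $\lambda \in \C \setminus \R$) by extracting the identity $T(A - \lambda) \subset (B - \lambda) T$ from the hypothesis and applying it to vectors $R(\lambda)y$ with $y \in D(T)$; iterating the invariance would produce a dense family of vectors in $D(A^k) \cap D(T)$ whose graph-closure captures $Q_m H$. If this bootstrap cannot be completed with the original pair $(A,B)$, a refinement of the construction (for example equipping $B$ with additional Chernoff-type pathology, or replacing $A$ by a self-adjoint operator whose spectral projections are forced to lie in $D(T)$) may be required to close the argument.
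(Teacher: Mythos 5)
Your proposal stalls exactly where you say it does, and the missing step is not a technicality that a bootstrap will recover: with $A$ self-adjoint and $T$ merely closed, the hypothesis $TA\subset BT$ gives you no control whatsoever over $D(T)$, so neither the density of your set $D^{*}$ nor the containment $Q_mH\subset D(T)$ can be extracted from it. The resolvent route is particularly unpromising because your $B$ (with $D(B^2)=\{0\}$) has empty resolvent set --- if $(B-\lambda)^{-1}$ were bounded and everywhere defined, then for any nonzero $y\in D(B)$ the vector $x=(B-\lambda)^{-1}y$ would satisfy $Bx=y+\lambda x\in D(B)$, producing a nonzero element of $D(B^2)$ --- so there is no identity of the form $TR_A(\lambda)\supset R_B(\lambda)T$ available to propagate invariance of $D(T)$. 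In short, the argument proves only that $T$ vanishes on $D^{*}$, and $D^{*}$ may a priori be $\{0\}$.

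The paper closes the gap by taking the "refinement of the construction" you allude to in your last sentence, but on the $A$ side rather than the $B$ side: it replaces the self-adjoint $A$ by a densely defined closed \emph{nilpotent} operator, namely
\[
A=\left(
      \begin{array}{cc}
        0 & S \\
        0 & 0 \\
      \end{array}
    \right)
\]
on $L^2(\R)\oplus L^2(\R)$ with $S$ unbounded self-adjoint, so that $D(A^2)=D(A)=L^2(\R)\oplus D(S)$ and $A^2x=0$ for every $x\in D(A)$. The point is that then
\[
D(TA^2)=\{x\in D(A^2): A^2x\in D(T)\}=\{x\in D(A): 0\in D(T)\}=D(A)
\]
with \emph{no} hypothesis on $D(T)$, so $TA\subset BT\Rightarrow TA^2\subset B^2T$ yields $D(A)\subset D(B^2T)=\ker T$ directly. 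Since $D(A)$ is dense and $\ker T$ is closed (here is where closedness of $T$ enters, replacing your graph-limit argument), one gets $\ker T=H$, hence $D(T)=H$ and $T=0$. This single change of example makes the whole density question you were fighting with evaporate; your iteration scheme itself is fine, but it is applied to the wrong pair $(A,B)$.
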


\begin{proof}
Let $H=L^2(\R)\oplus L^2(\R)$ and let $A$ be a densely defined
closed operator with domain $D(A)\subset L^2(\R)\oplus L^2(\R)$ such
that $A^2=0$ on $D(A^2)=D(A)$ (cf.
\cite{Ota-unboudned-nlipotent-idempotent}). An explicit and adapted
example to our case is to consider
\[A=\left(
      \begin{array}{cc}
        0 & T \\
        0 & 0 \\
      \end{array}
    \right)
\]
where $T$ is say an unbounded self-adjoint operator with domain
$D(T)\subset L^2(\R)$. By definition, $D(A)=L^2(\R)\oplus D(T)$.

Then as may easily be seen
\[A^2=\left(
      \begin{array}{cc}
        0 & 0_{D(T)} \\
        0 & 0_{D(T)}  \\
      \end{array}
    \right)\]
with $D(A^2)=D(A)=L^2(\R)\oplus D(T)$. Now, let $B$ be a closed
operator defined on $L^2(\R)\oplus L^2(\R)$ satisfying
$D(B^2)=\{0\}$ (as in \cite{Dehimi-Mortad-Chernoff}).

Now, clearly
\[TA\subset BT\Longrightarrow TA^2 \subset B^2T.\]
But
\[D(TA^2)=\{x\in D(A^2):0\in D(T)\}=D(A)\text{ and }D(B^2T)=\ker T.\]
Hence
\[D(A)\subset \ker T\subset L^2(\R)\oplus L^2(\R)\]
and so upon passing to the closure (w.r.t. $L^2(\R)\oplus L^2(\R)$)
\[\ker T=L^2(\R)\oplus L^2(\R)\]
because $\ker T$ is closed for $T$ is closed. Therefore, $Tx=0$ for
all $x\in D(T)$, i.e. $T\subset 0$. Accordingly, as $T$ is bounded
on $D(T)$ and also closed, then $D(T)$ becomes closed and so
$D(T)=H$, that is, $T=0$ everywhere, as coveted.
\end{proof}

\section{Concluding Remarks and an Open Problem}

It seems noteworthy that easy arguments allow us to show that
$BT=TB^*$ does imply that $B^*T=TB$ when $B$ is unitary even if $T$
is any (unbounded) operator. In other words, the self-adjointness of
$BT$ entails that of $B^*T$ if we further assume that $T$ is
self-adjoint. One may therefore wonder what happens if one assumes
that $B$ is only normal? The problem thus becomes: If $B\in B(H)$ is
normal and if $T$ is (unbounded) self-adjoint, then
\[BT\text{ is self-adjoint}\Longleftrightarrow B^*T\text{ is self-adjoint?}\]

Recall that if $T$ is bounded, then the self-adjointness of $BT$
gives the self-adjointness of $B^*T$ and vice versa. The analogous
question in the case of normality of $BT$ has already a negative
answer as a famous counterexample by Kaplansky shows (see
\cite{Kapl}. Cf. \cite{Benali-Mortad}). Going back to the main
question, observe that a naive counterexample is not available
either. In other words, if $BT$ is closed, then $B^*T$ is
necessarily closed (and conversely). Indeed, the normality of $B$
gives
\[\|B^*Tx\|=\|BTx\|,~\forall x\in D(B^*T)=D(BT)=D(T).\]
Hence, the graph norms of $B^*T$ and $BT$ coincide and hence the
closedness of one implies the closedness of the other. With the
closedness of $B^*T$ at hand, we may try to show that $B^*T$ is
normal and having a real spectrum. But honestly, we just do not know
whether this would lead anywhere or one has to look for
counterexamples?

\end{document}